\documentclass[a4paper,11pt]{article}

\usepackage[T1]{fontenc}
\usepackage[utf8]{inputenc}
\usepackage{lmodern}
\usepackage[english]{babel}
\usepackage{microtype}

\usepackage[a4paper, margin=1in]{geometry}

\usepackage{amsmath, amssymb, amsfonts, amsthm}
\usepackage{mathtools}
\usepackage{thmtools}
\usepackage{bm}
\usepackage{bbm}
\usepackage{mathrsfs}
\usepackage{physics}
\usepackage{tikz-cd}
\usepackage{tikz}
\usetikzlibrary{decorations.markings}
\usetikzlibrary{arrows.meta}
\usepackage{enumitem}
\usepackage{stmaryrd}
\usepackage{caption}
\usepackage{xcolor}
\usepackage{graphicx}

\usepackage{hyperref}
\usepackage{cleveref}

\definecolor{colorlinks}{RGB}{0, 24, 168}
\definecolor{colorcites}{RGB}{124, 10, 2}
\hypersetup{
  colorlinks=true,
  linkcolor=colorlinks,
  citecolor=colorcites,
  urlcolor=colorlinks,
  pdfborder={0 0 0}
}

\newtheorem{theorem}{Theorem}[section]

\newtheorem{lemma}[theorem]{Lemma}
\newtheorem{proposition}[theorem]{Proposition}
\newtheorem{corollary}[theorem]{Corollary}

\theoremstyle{definition}
\newtheorem{definition}[theorem]{Definition}

\newtheorem{remark}[theorem]{Remark}

\newcommand*{\nombres}[1]{\mathbb{#1}}

\newcommand*{\ZZ}{\nombres{Z}}

\newcommand*{\RR}{\nombres{R}}
\newcommand*{\CC}{\nombres{C}}

\newcommand*{\ldef}{\coloneqq}

\newcommand*{\term}[1]{\emph{#1}}
\newcommand*{\ie}{i.e.}

\DeclareMathOperator{\re}{Re}

\title{The zeta function of regular trees, their special values and functional equations}
\date{March 11, 2026}
\author{Dylan Müller}

\begin{document}
\maketitle

\begin{abstract}
We determine the special values at positive integers of the spectral zeta function associated with the combinatorial Laplacian on the regular tree. These values admit explicit formulas in terms of certain polynomials, which we show to be palindromic and to have non-negative integer coefficients with a combinatorial interpretation. Along the way, we uncover unexpected symmetries between the values of the zeta function at negative and positive integers, expressed at the level of their generating functions. Using these symmetries, we ultimately establish a functional equation of the type \( s \longleftrightarrow 1-s \) for a natural completion of the zeta function.
\end{abstract}

\section{Introduction}

In 1735, Euler famously found the exact value of the series
\begin{equation}\label{Riemann: Special value even}
    \zeta(2k) \ldef \sum_{n \ge 1} n^{-2k}
    = \frac{(2\pi)^{2k}}{2(2k)!}(-1)^{k+1} B_{2k},
\end{equation}
where \( B_{2k} \) denotes the Bernoulli numbers.
Later, he computed the values at negative odd integers
\[
    \zeta(1-2k) = -\frac{B_{2k}}{2k},
\]
and emphasized the striking symmetry between positive and negative special values.
This phenomenon ultimately led to the functional equation of the Riemann zeta function, later established by Riemann in his seminal 1859 memoir, where the zeta function was defined over the complex plane and its fundamental analytic properties were proven.

The general philosophy that symmetries manifest themselves through special values motivates the present work. We investigate the values at integers of the spectral zeta function associated with the regular tree \( T_{q+1} \) of degree \( q+1 \), which is the universal cover of every \((q+1)\)-regular graph. We find that these special values enjoy unexpectedly rich algebraic and combinatorial structure, together with two remarkable symmetries that ultimately lead to a functional equation for the zeta function of the tree.

\subsection*{Spectral zeta function of the tree}

In full generality, the spectral zeta function of a space \( X \) equipped with a Laplacian is defined as the Mellin transform of the trace of the heat kernel
\[
\zeta_X(s) \ldef \frac{1}{\Gamma(s)}\int_0^\infty \mathrm{Tr}(K_X(t))\, t^s \,\frac{\mathrm{d}t}{t},
\]
discarding the \( 0 \)-eigenvalue when it belongs to the point spectrum of the Laplacian. For instance, for the circle \( \RR/\ZZ \), whose spectrum is given by \( \lambda = 4 \pi^2 n^2 \), with \( n \in \ZZ \), one obtains
\[
 \zeta_{\RR/\ZZ}(s) = \sum_{\lambda \neq 0}\lambda^{-s} = 2 (2\pi)^{-2s}\zeta(2s).
\]

Another fundamental example is the spectral zeta function of the discrete line \( \ZZ \):
\begin{equation*}
    \zeta_{\ZZ}(s)
    = \frac{1}{\Gamma(s)}\int_0^{\infty} e^{-2t}I_0(2t)\,t^s \,\frac{\mathrm{d}t}{t}
    = \frac{1}{2^{2s}\pi^{1/2}} \frac{\Gamma(1/2-s)}{\Gamma(1-s)}
    = \binom{-2s}{-s}
    = \prod_{k = 1}^{\infty}\frac{(1-s/k)^2}{(1-2s/k)}.
\end{equation*}
Its special values at negative integers are central binomial coefficients and are directly related to Catalan numbers. Quite remarkably, it admits a functional equation of the type \( s \leftrightarrow 1-s \), in complete analogy with other zeta functions in the continuous setting, and shares with them many other properties; see \cite{FK17, KP23}.

As a more general graph, the regular tree \( T_{q+1} \) of degree \( q + 1 \) is fundamental among regular graphs. Through the spectral measure, its spectral zeta function is given by
\begin{equation}\label{Def:zeta function on the tree}
    \zeta_q(s) = \frac{q+1}{2\pi}\int_{-2\sqrt{q}}^{2\sqrt{q}} (q+1-\lambda)^{-s}\frac{\sqrt{4q-\lambda^2}}{(q+1)^2-\lambda^2}\,\mathrm{d}\lambda.
\end{equation}
It is a special case of a hypergeometric function; see \cite[Theorem 1.4]{FK17}. For \( q > 1 \),
\begin{equation}\label{Eq:FK17:Appell formula}
    \zeta_{q}(s)=\frac{q(q+1)}{(q-1)^{2}(\sqrt{q}-1)^{2s}}F_{1}(3/2,s+1,1,3;u,v),
\end{equation}
with \( u = -4\sqrt{q} / (\sqrt{q}-1)^{2} \), \( v = 4\sqrt{q} / (\sqrt{q}+1)^{2} \),
and where \( F_1 \) is one of Appell's hypergeometric functions.

\subsection*{Main results}

One of our main results is the determination of the special values at positive integers of \( \zeta_q \) through a family of polynomials with remarkable algebraic and combinatorial properties.

\begin{theorem}\label{Thm:Main}
    For all integers \( n \ge 1 \) and \( q \ge 2 \), we have
    \begin{equation}\label{Def:P_n}
        \zeta_q(n) = \frac{q}{(q-1)^{2n-1}(q+1)^n}P_n(q),
    \end{equation}
    where \( P_n \) is a palindromic and monic polynomial of degree \( 2n-2 \) with integer coefficients. Moreover, the coefficients of \( P_n \) count weighted $2$-coloured Dyck words, and hence are non-negative.
\end{theorem}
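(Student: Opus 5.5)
The plan is to replace the integral \eqref{Def:zeta function on the tree} by a generating function in which all the values \( \zeta_q(n) \), \( n\ge 1 \), appear at once, and then to extract coefficients explicitly. Write \( \mu_q \) for the probability measure \( \frac{q+1}{2\pi}\frac{\sqrt{4q-\lambda^2}}{(q+1)^2-\lambda^2}\,\mathrm{d}\lambda \) on \( [-2\sqrt q,2\sqrt q] \) (the Kesten--McKay measure), and let
\[
G(z)=\int\frac{\mathrm{d}\mu_q(\lambda)}{z-\lambda}
\]
be its Stieltjes transform. Expanding \( (q+1-\lambda-w)^{-1} \) geometrically gives, for small \( w \),
\[
\sum_{n\ge1}\zeta_q(n)\,w^{n}=\int\frac{w\,\mathrm{d}\mu_q(\lambda)}{q+1-w-\lambda}=w\,G(q+1-w).
\]
The classical closed form is
\[
G(z)=\frac{2q}{(q-1)z+(q+1)\sqrt{z^2-4q}}.
\]
I would verify it by Stieltjes inversion against the density. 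It is regular at \( z=q+1 \) for \( q\ge2 \), which is why \( \zeta_q(n) \) is finite for every \( n \).

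The key simplification is to uniformize the square root by setting \( z=t+q/t \), taking the branch with \( t\to q \) as \( z\to q+1 \). Then \( \sqrt{z^2-4q}=t-q/t \), and the denominator becomes \( 2qt-2q/t \), so that
\[
G=\frac{t}{t^2-1}.
\]
The variable \( t=t(w) \) is defined implicitly by \( t+q/t=q+1-w \). Writing \( t=q(1+y) \), this reads
\[
w=-(q-1)y+\frac{q\,y^2}{1+y}\cdot\frac{1}{1}\cdot\frac{1}{1}\,,\qquad\text{that is,}\qquad (q-1)y\,(1+y)-q y^2 = -w(1+y),
\]
which is a quadratic with a Catalan-type series solution. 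I would then use Lagrange inversion, applied to \( H(y)=\frac{q(1+y)}{q^2(1+y)^2-1} \), to get \( [w^{n-1}]G \) as an explicit finite double sum. The sum would involve binomial coefficients, powers of \( (q-1)^{-1} \), and the factor \( (q^2-1)^{-1}=\big((q-1)(q+1)\big)^{-1} \) coming from \( t^2-1 \) at \( t=q \). Collecting the denominators should give exactly \( q(q-1)^{-(2n-1)}(q+1)^{-n} \) times a polynomial \( P_n(q)\in\ZZ[q] \). Reading off the extreme terms should show that \( \deg P_n=2n-2 \) and that \( P_n \) is monic; a check at \( n=1 \) gives \( P_1=1 \).

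For palindromicity I would use the involution \( t\mapsto q/t \) together with \( q\mapsto 1/q \) in the uniformized picture. Rescaling \( t=\sqrt q\,\tau \) makes \( z/\sqrt q=\tau+1/\tau \) symmetric. The equation \( t+q/t=q+1-w \) is then invariant under \( q\mapsto 1/q \), \( t\mapsto t/q \), \( w\mapsto w/q \). Tracking how \( G\,w \) and the prefactor \( q(q-1)^{-(2n-1)}(q+1)^{-n} \) transform under this substitution should give
\[
P_n(q)=q^{2n-2}P_n(1/q).
\]
The factors \( (q-1)^{2n-1} \) and \( (q+1)^n \) pick up exactly the powers of \( q \) and the signs needed. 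Alternatively, one can verify palindromicity directly on the double-sum formula by the reindexing \( k\mapsto 2n-2-k \).

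The main obstacle is the combinatorial part, namely showing that the coefficients are non-negative and count weighted 2-coloured Dyck words. The explicit sum from Lagrange inversion will contain signs, coming from \( -(q-1)y \) and from \( 1/(1-(\cdot)) \) expansions in \( q \). My first attempt would be to rewrite \( t(w) \) via its continued-fraction or Catalan expansion in the variable \( x=w/((q-1)^2) \), whose coefficients are Narayana polynomials in \( q \) and hence count Dyck paths with peaks weighted by \( q \). I would then expand \( t/(t^2-1) \) as a product of such path generating functions, treating the two factors \( 1/(t-1) \) and \( 1/(t+1) \) as two colours. This should be done while absorbing the remaining powers of \( (q-1) \) and \( (q+1) \) into the prefactor, so that what is left is a manifestly positive sum over 2-coloured Dyck words with weights \( q^{\text{statistic}} \). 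If the signs do not cancel cleanly in this form, I would fall back on finding a sign-reversing involution on the signed terms of the Lagrange-inversion sum.
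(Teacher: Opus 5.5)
\textbf{The main gap is the combinatorial part.} The last sentence of the theorem is not proved. You never identify the weight on 2-coloured Dyck words, and the mechanism you sketch does not produce a positive expansion.

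Carrying out your Narayana idea with $x=w/(q-1)^2$, one finds $t=q-\tfrac{qw}{q-1}(1+Z)$, where $Z=x(1+Z)(1+qZ)$. Splitting $G=\tfrac12\bigl(\tfrac1{t-1}+\tfrac1{t+1}\bigr)$ then gives
\[
P_n=\frac{1}{2q}\Bigl[(q+1)^nA_{n-1}+(q-1)(q+1)^{n-1}B_{n-1}\Bigr],
\]
with $A_{n-1}=[x^{n-1}]\bigl(1-xq(1+Z)\bigr)^{-1}$ and $B_{n-1}=[x^{n-1}]\bigl(1-\tfrac{q-1}{q+1}\,xq(1+Z)\bigr)^{-1}$. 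The first piece is positive, but your second ``colour'' carries powers of $q-1$.

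Already $P_2=\tfrac12\bigl[(q+1)^2+(q-1)^2\bigr]$ is positive only after cancellation between the two pieces. The division by $2q$ also has no combinatorial meaning. Your fallback, a sign-reversing involution, is not specified.

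The missing idea is the paper's, in three steps:
\begin{enumerate}
\item Turn the closed form into the quadratic equation $qz\bigl(2-z(q-1)^2\bigr)T^2+\bigl(z(q-1)^2-1\bigr)T+1=0$ for $T(z)=\sum_{n\ge1}P_n(q)z^{n-1}$.
\item Introduce the explicit statistic $h(w)=n+r_B(w)-r_R(w)$ on 2-coloured Dyck words of length $2n$.
\item Use the first-return decomposition $w=w_1Uw_2\beta$. There the block count changes by $\varepsilon(w_2\beta)\in\{0,\pm1\}$ according to whether $w_2$ ends in $\beta$. This shows that $\sum_w t^{h(w)}z^{l(w)}$ satisfies the same quadratic with the same constant term.
\end{enumerate}
That argument gives integrality and non-negativity at once.

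\textbf{Smaller issues in the analytic part.} This part is essentially the paper's argument in other clothing: $w\,G(q+1-w)$ is the paper's $G_+(w)=w\,\mathcal{L}[K_q](-w)$, and your closed form for $G$ is Kesten's $F$. You bypass the symmetry $G_+(z)=-G_-(1/z)$, which is fine. However, several steps are only asserted.
\begin{itemize}
\item Your displayed relation between $w$ and $y$ is wrong. From $t=q(1+y)$ one gets $(q-1)y+qy^2=-w(1+y)$.
\item Integrality does not simply fall out of Lagrange inversion, which introduces the factors $1/m$ and the $1/2$ from the partial fractions. The paper instead uses the recursion \eqref{Lem:Recursive relation for a_n} together with the fact that $q^2-1$ is a unit in $\ZZ[[q]]$.
\item For palindromicity, the map $t\mapsto t/q$ lands on the wrong branch, the one with $t(0)=1$. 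The correct correspondence is $t\mapsto 1/t$ with $(q,w)\mapsto(1/q,w/q)$. It sends $G$ to $-G$ and yields $\zeta_{1/q}(n)=-q^{n-1}\zeta_q(n)$, which is equivalent to $q^{2n-2}P_n(1/q)=P_n(q)$. This is a valid alternative to the paper's argument via the $q\leftrightarrow 1/q$ invariance of $k^2$.
\item Degree and monicity still need an argument, for instance the paper's limit $(q+1)^n\zeta_q(n)\to1$ as $q\to\infty$.
\end{itemize}
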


The first polynomials can be computed by hand:
\begin{table}[h!]
\centering
\begin{tabular}{c|c|c|c|c}
$n$
    & $1$
    & $2$
    & $3$
    & $4$
\\ \hline
$P_n$
    & $1$
    & $q^2+1$
    & $q^4 + q^3 + 4q^2 + q + 1$
    & $q^6 + 3q^5 + 11q^4 + 10q^3 + 11q^2 + 3q + 1$
\end{tabular}.
\end{table}

In \Cref{Cor:Quadratic equation for T}, a recursive relation for the polynomials \( P_n \) is given. The first \( P_n \) displayed above are all irreducible over the rationals, but we did not find any evidence that this holds in general. It is true, however, that \( P_n \) has no rational root. The combinatorial aspects of \( P_n \) are treated separately in Section~\ref{Sec:Combinatorial interpretation} and can be used to show that, except for \( P_2 \), all coefficients of \( P_n \) are positive.

In the particular case \( q = 1 \), the spectral zeta function \( \zeta_\ZZ \) has zeros at positive integers, as follows for instance from the infinite product above.

Concerning the special values at negative integers of \( \zeta_q \), they were computed in \cite[Theorem~3]{CJKS25}; for \( q \ge 1 \) and \( m \ge 0 \),
\begin{equation}
\label{eq:CJKSformula}
\zeta_q(-m)
=
\sum_{k=0}^{m}
\binom{m}{k}^{2}\, q^{\,m-k}
\;-\;
(q-1)
\sum_{j=1}^{\lfloor m/2 \rfloor}
\;\sum_{k=0}^{m-2j}
\binom{m}{k}\binom{m}{2j+k}\,
q^{\,m-2j-k}.
\end{equation}
When viewed as functions of \( q \), these are monic polynomials of degree \( m \), with non-negative integer coefficients, although not palindromic.

At first sight, the formulas above do not suggest any direct relation between the values at positive and negative integers. In fact, \Cref{Thm:Main} will arise as a consequence of an unexpected symmetry between the special values at positive and negative integers, expressed entirely at the level of their respective generating functions. More precisely, define the generating functions of positive and negative special values by
\begin{equation}\label{Def:L and F}
    G_+(z)=\sum_{n\ge 1}\zeta_q(n)z^n,
    \qquad
    G_-(z)=\sum_{n\ge 0}\zeta_q(-n)z^n.
\end{equation}
Let \( \Omega_q^+ \ldef [(\sqrt{q}-1)^2,(\sqrt{q}+1)^2] \) be the spectrum of the combinatorial Laplacian, and let \( \Omega_q^{-} \) be the image of \( \Omega_q^+ \) under the inversion \( z \mapsto 1/z \). Writing \( \mathbb{P}^1(\CC) = \CC \cup \{\infty\} \) for the Riemann sphere, we obtain:

\begin{theorem}\label{Thm:Fundamental symmetry}
    For \( q > 1 \), the functions \( G_\pm \) admit analytic continuations to \( \mathbb{P}^1(\CC) \setminus \Omega_q^\pm \), respectively. Moreover, for \( z \) outside \( \Omega_q^+ \), the following identity holds:
    \begin{equation}
        G_+(z) + G_-\left(\frac{1}{z}\right) = 0.
    \end{equation}
\end{theorem}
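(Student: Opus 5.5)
The plan is to write both generating functions as Stieltjes-type integrals against the same spectral measure and then compare them directly. Set \( \mu = q+1-\lambda \in \Omega_q^+ \). By \eqref{Def:zeta function on the tree} there is a positive finite measure \( \nu \) supported on \( \Omega_q^+ \), namely the pushforward of the spectral density, with
\[
\zeta_q(s) = \int_{\Omega_q^+} \mu^{-s}\,\mathrm{d}\nu(\mu).
\]
The density is integrable: at the endpoints it behaves like a square root, and the denominator \( (q+1)^2-\lambda^2 \) does not vanish on \( [-2\sqrt q, 2\sqrt q] \) when \( q>1 \). So this formula holds for every real integer \( s \), and in particular for negative ones. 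I will also check, using the known values of the Plancherel measure, that \( \nu \) has total mass \( \zeta_q(0)=1 \), which matches \eqref{eq:CJKSformula} at \( m=0 \).

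For \( |z| \) small, summing the geometric series under the integral gives
\[
G_+(z)=\int \sum_{n\ge1}(z/\mu)^n\,\mathrm{d}\nu = \int \frac{z}{\mu-z}\,\mathrm{d}\nu(\mu),
\qquad
G_-(w)=\int \sum_{n\ge0}(w\mu)^n\,\mathrm{d}\nu = \int \frac{1}{1-w\mu}\,\mathrm{d}\nu(\mu).
\]
These are valid for \( |z|<(\sqrt q-1)^2 \) and \( |w|<(\sqrt q+1)^{-2} \) respectively. The right-hand sides are Cauchy–Stieltjes transforms of a compactly supported measure. Hence they are holomorphic off the support: the first on \( \CC\setminus\Omega_q^+ \), the second on \( \CC\setminus\Omega_q^- \), since \( 1-w\mu=0 \) exactly when \( w\in 1/\Omega_q^+ \). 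For the behaviour at \( \infty \):
\begin{itemize}[label=\(\cdot\)]
\item \( z/(\mu-z)\to -1 \) uniformly on the support, so \( G_+(\infty)=-\nu(\Omega_q^+)=-1 \) and \( G_+ \) is holomorphic at \( \infty \);
\item \( 1/(1-w\mu) = O(1/w) \) uniformly on the support, so \( G_- \) is holomorphic at \( \infty \) as well.
\end{itemize}
This gives the analytic continuations to \( \mathbb{P}^1(\CC)\setminus\Omega_q^\pm \).

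The identity is then pointwise algebra. For \( z\notin\Omega_q^+ \), substitute \( w=1/z \):
\[
\frac{1}{1-\mu/z}=\frac{z}{z-\mu}=-\frac{z}{\mu-z},
\]
so the integrands of \( G_-(1/z) \) and \( G_+(z) \) cancel exactly, and the sum vanishes on the whole domain. Both sides are analytic there and agree with the integral representations, so no separate identity-theorem argument is needed. The points \( z=0 \) and \( z=\infty \) can be checked directly, or obtained by continuity.

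The step I expect to need the most care is justifying \( \zeta_q(-m)=\int\mu^m\,\mathrm{d}\nu \), i.e.\ that the values \eqref{eq:CJKSformula} are the moments of the same measure. The integral \eqref{Def:zeta function on the tree} is entire in \( s \) because \( \mu \) ranges over a compact set bounded away from \( 0 \) when \( q>1 \). It therefore coincides with the analytic continuation of the Mellin definition, and I take it as the definition of \( \zeta_q \) for all \( s \). With that in place, the remaining steps are routine.
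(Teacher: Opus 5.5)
Your proposal is correct and is essentially the paper's proof: the paper writes both $G_+(z)$ and $G_-(1/z)$ in terms of the Laplace transform of the heat kernel, $\mathcal{L}[K_q](-z)=\int \frac{\mathrm{d}\nu(\mu)}{\mu-z}$, which is exactly the Cauchy--Stieltjes transform of your measure $\nu$, so your cancellation $\frac{1}{1-\mu/z}=-\frac{z}{\mu-z}$ is the paper's identity $G_+(z)=z\,\mathcal{L}[K_q](-z)=-G_-(1/z)$ with the heat kernel left out. Your explicit check of the points $0$ and $\infty$ of $\mathbb{P}^1(\CC)$, where $G_+(\infty)=-1$ and $G_-(0)=1$, fills in a detail the paper leaves implicit.
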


This symmetry allows us to deduce \( G_+ \) from \( G_- \), the generating function of the special values \eqref{eq:CJKSformula}. In turn, \( G_- \) is itself deduced from the generating function of the probability of return after \( n \) steps on \( T_{q+1} \), already present in \cite{Kes59}. A closed formula for \( G_+ \) is therefore obtained, and \Cref{Thm:Main} follows directly.

A more structural question, raised by Karlsson in \cite[p.~9]{K20}, is whether \( \zeta_q \) admits a functional equation of the type \( s \leftrightarrow 1-s \). This problem was also raised in \cite{FK17}. We give a positive answer to this question.

\begin{theorem}\label{Thm:F.E.}
    For \( q > 1 \), define the entire function
    \[ \xi_q(s) \ldef (q-1)^s(2(q+1)\zeta_q(s)-\zeta_q(s-1)). \]
    Then, for all \( s \in \CC \),
    \[ \xi_q(1-s) = \xi_q(s). \]
\end{theorem}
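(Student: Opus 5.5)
The plan is to work directly from the integral representation \eqref{Def:zeta function on the tree}. In the combination $2(q+1)\zeta_q(s)-\zeta_q(s-1)$, the factor $(q+1)^2-\lambda^2$ in the denominator should cancel. What remains is a Mellin-type integral over $\Omega_q^+$ of an algebraic weight. That weight should be invariant under the involution $x\mapsto (q-1)^2/x$, which maps $\Omega_q^+$ onto itself.

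\emph{Step 1: change of variables.} Substitute $x=q+1-\lambda$, so that $x$ runs over $\Omega_q^+=[a,b]$ with $a=(\sqrt q-1)^2$ and $b=(\sqrt q+1)^2$. The two factors of the weight become
\[
4q-\lambda^2=(x-a)(b-x), \qquad (q+1)^2-\lambda^2=x\,(2(q+1)-x).
\]
This gives
\[
\zeta_q(s)=\frac{q+1}{2\pi}\int_a^b x^{-s-1}\,\frac{\sqrt{(x-a)(b-x)}}{2(q+1)-x}\,\mathrm{d}x .
\]
The integrand is smooth in $x$ on the compact interval $[a,b]$, which does not contain $0$, and $x^{-s}$ is entire in $s$. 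Hence this formula defines $\zeta_q$ as an entire function of $s$, and it agrees with the analytic continuation used in the statement. I would note this explicitly so that the identity makes sense for all $s\in\CC$.

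\emph{Step 2: the cancellation.} Replacing $s$ by $s-1$ multiplies the integrand by $x$. Therefore
\[
2(q+1)\zeta_q(s)-\zeta_q(s-1)=\frac{q+1}{2\pi}\,I(s), \qquad I(s)\ldef\int_a^b x^{-s-1}\sqrt{(x-a)(b-x)}\,\mathrm{d}x .
\]
This is the key observation: the awkward denominator $2(q+1)-x$ disappears. It also shows that $\xi_q$ is entire.

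\emph{Step 3: the involution.} Use $ab=(q-1)^2$ and substitute $x=ab/y$, which maps $[a,b]$ onto itself with reversed orientation. A short computation gives:
\begin{itemize}
\item $(x-a)(b-x)=ab\,(y-a)(b-y)/y^2$, so $\sqrt{(x-a)(b-x)}=(q-1)\sqrt{(y-a)(b-y)}/y$;
\item $|\mathrm{d}x|=ab\,y^{-2}\,\mathrm{d}y$;
\item $x^{-s-1}=(q-1)^{-2s-2}\,y^{s+1}$.
\end{itemize}
Collecting powers, the prefactor is $(q-1)^{-2s-2}\cdot(q-1)\cdot(q-1)^2=(q-1)^{1-2s}$, and the total power of $y$ is $s+1-1-2=s-2=-(1-s)-1$. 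Hence
\[
I(s)=(q-1)^{1-2s}\,I(1-s).
\]
Multiplying by $(q-1)^s$ gives $(q-1)^sI(s)=(q-1)^{1-s}I(1-s)$. Together with Step 2, this is exactly $\xi_q(s)=\xi_q(1-s)$.

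I expect no serious obstacle. The main points needing care are the bookkeeping of powers of $(q-1)$ in Step 3, and the justification in Step 1 that the integral representation is valid and entire for all $s$, so that it coincides with the continuation used in the paper. The paper instead frames things through the symmetry of Theorem~\ref{Thm:Fundamental symmetry}, and one could alternatively derive the result from it. That route would need a uniqueness argument to pass from integer values to all $s$, whereas the direct substitution avoids this.
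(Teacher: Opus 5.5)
Your proof is correct, and it takes a genuinely different route from the paper. The computations check out: the combination $2(q+1)\zeta_q(s)-\zeta_q(s-1)$ cancels the factor $2(q+1)-x$, which is $\ge a>0$ on $[a,b]$. The substitution $x=(q-1)^2/y$ then gives $I(s)=(q-1)^{1-2s}I(1-s)$. One small correction: the integrand is only continuous, not smooth, at the endpoints because of the square root, but continuity is all you need for $I$ to be entire.

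The paper's proof does not go back to the density. It first proves the generating-function identity $\mathcal{E}(z)=z+1$ of Proposition~\ref{Prop:F.E. at the level of generating functions}, which relies on the closed algebraic forms of $G_\pm$ (Kesten's formula together with $G_+(z)=-G_-(1/z)$). It then integrates $\mathcal{E}(z)z^{-s-1}$ over a circle in $\re z>0$ around $\sigma(B)=[\beta,\beta^{-1}]$, where $B=\Delta_q/(q-1)$. Finally it evaluates the resolvent terms by the functional calculus of Lemma~\ref{Lem:Cauchy integral formula}.

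Your argument is shorter and fully elementary. It also explains the normalization of $\xi_q$ most transparently: the chosen combination is exactly the one that removes the non-invariant denominator. What remains is the weight $x^{-3/2}\sqrt{(x-a)(b-x)}\,\mathrm{d}x$, which is invariant under $x\mapsto(q-1)^2/x$. This is the same involution that makes $\sigma(B)$ inversion-invariant in the paper.

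The paper's route is designed instead to show that a symmetry between generating functions of special values lifts directly to the zeta function. The author hopes to reuse this mechanism where algebraic relations between generating functions are known but explicit densities are not. Contrary to your closing remark, that route needs no uniqueness argument from integer values: the contour integral gives the identity for every $s$ at once.
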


As we will see, this result arises from a second symmetry at the level of the generating functions, namely \Cref{Prop:F.E. at the level of generating functions}. This also explains the somewhat unexpected normalization of \( \zeta_q \) appearing in \Cref{Thm:F.E.}.

More conceptually, this result shows that a symmetry at the level of generating functions of special values can be transferred to the zeta function itself.

\subsection*{The connection with Sato--Tate and \( q \to \infty \)}

The notation \( T_{q+1} \) is motivated by its connection with number theory and, for instance, with \( q \)-adic groups; see \cite{Ser02}. The on-diagonal spectral measure \( \mu_q \) of the adjacency operator on \( T_{q+1} \) is known as the \term{Kesten--McKay law} \cite{Kes59,McK81}, and it is absolutely continuous with respect to Lebesgue measure:
\[ \mathrm{d}\mu_q(x) = \frac{q+1}{2\pi}\frac{\sqrt{4q-x^2}}{(q+1)^2-x^2} \,\mathrm{d}x. \]
The study of \( \mu_q \) through harmonic analysis goes back to \cite{Ca72}. It is involved in the ``vertical'' Sato--Tate problem. Fix a prime \( q \); the \( q \)-Hecke operator preserves the space of cusp forms of a given level and weight. When properly rescaled, its eigenvalues asymptotically distribute according to \( \mu_q \) as the level/ weight tends to infinity; see \cite{S97}.

When \( q \to \infty \), the measure \( \mu_q \), under the rescaling \( x = \sqrt{q}\,u \), converges weakly to the \term{Sato--Tate measure}
\[
  \mu_{\infty} \ldef \frac{1}{2\pi}\sqrt{4-u^2}\,\mathrm{d}u.
\]
The Sato--Tate conjecture --- now proven in great generality \cite{BGLHT11} --- states that, under the appropriate rescaling, the Hecke eigenvalues of a fixed newform for primes \( q \le N \) asymptotically distribute according to \( \mu_{\infty} \) as \( N \to \infty \).

The Sato--Tate measure is called the Wigner semicircle law in the context of statistical physics. Both the semicircle and the Kesten--McKay laws play fundamental roles in random regular graph models, and determining precisely how \( \mu_q \) converges (locally) toward \( \mu_{\infty} \) is of particular interest in this context; see \cite{BKY17}.

From this perspective, it is natural to let \( q \to \infty \) in our setting as well. Karlsson pointed out to me that the zeta function of the Sato--Tate measure, corresponding to \( q = \infty \), has a functional equation of the type \( s \leftrightarrow 1-s \). More precisely, by defining
\[ \zeta_{\infty}(s) \ldef \frac{1}{2\pi}\int_{-2}^{2}(2-u)^{-s}\sqrt{4-u^2}\,\mathrm{d}u, \]
he observed that it admits a functional equation of the desired type.

\begin{proposition}[\cite{K25}]
    For \( \re s < 1/2 \), let
    \begin{equation}
        \xi_{\infty}(s) \ldef (2-s)\cdot 2^s\cos(\pi s/2)\zeta_{\infty}(1+s/2).
    \end{equation}
    Then \( \xi_\infty \) can be analytically continued to an entire function and the following identity holds for all \( s \in \CC \):
    \[ \xi_\infty(1-s) = \xi_\infty(s). \]
\end{proposition}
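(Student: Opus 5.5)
The plan is to compute \( \zeta_\infty \) in closed form as a ratio of Gamma functions, and then check that the prefactor in \( \xi_\infty \) turns it into a manifestly symmetric expression. First I evaluate the integral by the substitution \( u = 2\cos\theta \), \( \theta \in [0,\pi] \). Then
\[ 2-u = 4\sin^2(\theta/2), \qquad \sqrt{4-u^2} = 2\sin\theta, \qquad \mathrm{d}u = -2\sin\theta\,\mathrm{d}\theta. \]
Next I set \( \varphi = \theta/2 \) and use \( \sin\theta = 2\sin\varphi\cos\varphi \). The integral becomes a Beta integral:
\[ \zeta_\infty(s) = \frac{16}{\pi}\,4^{-s}\int_0^{\pi/2}\sin^{2-2s}\varphi\,\cos^2\varphi\,\mathrm{d}\varphi = \frac{8}{\pi}\,4^{-s}\,B\!\left(\tfrac32-s,\tfrac32\right). \]
This converges exactly when \( \re s < 3/2 \). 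Using \( \Gamma(3/2) = \sqrt{\pi}/2 \), it gives
\[ \zeta_\infty(s) = \frac{4^{1-s}}{\sqrt{\pi}}\,\frac{\Gamma(3/2-s)}{\Gamma(3-s)}. \]

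I then substitute \( s \mapsto 1+s/2 \). This is legitimate for \( \re s < 1 \), so certainly in the stated range \( \re s < 1/2 \), and it yields
\[ \zeta_\infty(1+s/2) = 2^{-s}\pi^{-1/2}\,\frac{\Gamma\bigl((1-s)/2\bigr)}{\Gamma(2-s/2)}. \]
The factor \( 2^{-s} \) cancels the \( 2^s \) in \( \xi_\infty \). The factor \( 2-s \) is absorbed by the functional equation of \( \Gamma \): since \( \Gamma(2-s/2) = (1-s/2)\,\Gamma(1-s/2) \), we get \( (2-s)/\Gamma(2-s/2) = 2/\Gamma(1-s/2) \). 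Hence
\[ \xi_\infty(s) = \frac{2}{\sqrt{\pi}}\,\frac{\cos(\pi s/2)\,\Gamma\bigl((1-s)/2\bigr)}{\Gamma(1-s/2)}. \]

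The key step is to absorb the cosine with the reflection formula \( \Gamma\bigl(\tfrac{1-s}{2}\bigr)\Gamma\bigl(\tfrac{1+s}{2}\bigr) = \pi/\cos(\pi s/2) \). This gives
\[ \xi_\infty(s) = \frac{2\sqrt{\pi}}{\Gamma\bigl(\tfrac{1+s}{2}\bigr)\,\Gamma\bigl(1-\tfrac{s}{2}\bigr)}. \]
Since \( 1/\Gamma \) is entire, the right-hand side gives the analytic continuation of \( \xi_\infty \) to an entire function. Under \( s \mapsto 1-s \) the two arguments swap, \( \tfrac{1+s}{2} \leftrightarrow 1-\tfrac{s}{2} \), so \( \xi_\infty(1-s) = \xi_\infty(s) \) follows immediately.

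The only point needing care is the Beta-integral evaluation together with tracking the constants. The symmetry itself is forced once the reflection formula is applied, and that step explains why the normalizing factors \( (2-s) \) and \( \cos(\pi s/2) \) were chosen.
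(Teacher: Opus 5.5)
Your proof is correct: the Beta-integral evaluation $\zeta_\infty(s) = 4^{1-s}\pi^{-1/2}\,\Gamma(3/2-s)/\Gamma(3-s)$, the cancellation of the factors $(2-s)$ and $2^s$, and the reflection-formula step all check out. They give $\xi_\infty(s) = 2\sqrt{\pi}/\bigl(\Gamma(\tfrac{1+s}{2})\Gamma(1-\tfrac{s}{2})\bigr)$, which is entire and visibly symmetric under $s \mapsto 1-s$. The paper does not prove this proposition itself (it is attributed to Karlsson) and only points to the identity $\zeta_\infty(1+s) = \zeta_\ZZ(s)/(1-s)$ together with the Gamma closed form of $\zeta_\ZZ$; your closed form for $\zeta_\infty$ is exactly that identity, so your argument is essentially that route, made self-contained.
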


The moments of the Sato--Tate measure are Catalan numbers and, in fact, the zeta function \( \zeta_\infty \) can be viewed as an analytic continuation of the Catalan numbers. Considering the formula
\[ \zeta_\ZZ(s) = \binom{-2s}{-s}, \]
it is not surprising that \( \zeta_\ZZ \) and \( \zeta_\infty \) are related through the identity
\[ \zeta_{\infty}(1+s) = \frac{1}{1-s}\zeta_\ZZ(s). \]
This is another way of seeing that \( \zeta_\infty \) admits a functional equation.

Therefore, both \( q = 1 \) and \( q = \infty \) admit a functional equation in complete analogy with the Riemann case. Our \Cref{Thm:F.E.} completes the picture for all intermediate \( 1 < q < \infty \).
\subsection*{Further perspectives}
The method developed in the present work suggests a broader framework for studying spectral zeta functions on transitive non-amenable graphs. The fundamental symmetry at the level of generating functions of \Cref{Thm:Fundamental symmetry} relies only on the presence of a spectral gap, and therefore persists on any transitive non-amenable graph. Moreover, the functional equation of \Cref{Thm:F.E.} is a consequence of algebraic relations at the level of the generating functions. It appears that these algebraic relations are specific to the tree, and with it the closed formulas and combinatorial structure of the special values. It is known that the Green function of virtually free group is algebraic, as a consequence of Muller-Schupp theorem \cite{MS83}. For example, it would be interesting to investigate algebraic relations between generating functions of special values --- and therefore functional equations for zeta functions --- in this context, as well as computing the special values.
\subsection*{Acknowledgements}

This work was supported by the Swiss NSF Grants 200020-200400 and 200021-212864. The author warmly thanks Professor Karlsson for his many useful comments and revisions, without which this paper would not have reached its final form.

\section{The heat kernel and the spectral zeta function on \( T_{q+1} \)}\label{sec:Spectral zeta of the tree}

Let \( q \ge 1 \) be an integer. The combinatorial Laplacian on the regular tree \( T_{q+1} \) is defined by \( \Delta_q = (q+1)-A \), where \( A \) is the adjacency operator. It acts on functions \( f \colon T_{q+1} \rightarrow \CC \) by
\[
  (\Delta_q f)(x) \ldef \sum_{y \sim x}\bigl(f(x)-f(y)\bigr).
\]
It is a bounded self-adjoint operator on \( \ell^2(T_{q+1}) \) with spectrum \( \Omega_q^+ = [(\sqrt{q}-1)^2,(\sqrt{q}+1)^2] \). The heat kernel is defined as the one-parameter family of operators
\[
  K_q(t) \ldef \exp(-t\Delta_q).
\]
It acts by convolution on \( \ell^2(T_{q+1}) \) against the function
\[
  K_q(t,x,y) = \langle K_q(t)\delta_x \mid \delta_y \rangle.
\]
Precisely, for \( f \in \ell^2(T_{q+1}) \) we have
\[
  (K_q(t)f)(x) = \sum_{y \in T_{q+1}}K_q(t,x,y)f(y),
\]
and the function \( K_q(t,x,y) \) is often also called the heat kernel.

For infinite graphs such as \( T_{q+1} \), the operator \( K_q(t) \) is not trace class. To remedy this, we first observe that for transitive graphs \( \Gamma \), the function \( t \mapsto K_{\Gamma}(t,x,x) \) is independent of the vertex \( x \in \Gamma \), and second that if \( \Gamma \) is finite, then
\[
  \mathrm{Tr}\, K_{\Gamma}(t) = K_{\Gamma}(t,x,x)\cdot|\Gamma|.
\]
Therefore, for infinite transitive graphs such as \( T_{q+1} \), the trace is often replaced by the map \( t \mapsto K_{\Gamma}(t,x,x) \), which is independent of the choice of vertex \( x \in T_{q+1} \). Fix a vertex \( \mathbf{0} \in T_{q+1} \). The (local) spectral zeta function at \( \mathbf{0} \) is defined by taking the Mellin transform of this localized trace, namely
\[
  \zeta_q(s) \ldef \frac{1}{\Gamma(s)}\int_0^{+\infty}K_q(t,\mathbf{0},\mathbf{0})\,t^s\frac{\mathrm{d}t}{t}.
\]

The heat kernel on \( T_{q+1} \) has been computed in numerous places; see, for example, \cite{CY99} for a direct computation requiring no prior knowledge. Using the on-diagonal spectral measure \( \mu_q \), we have
\[
  K_q(t) \ldef K_q(t,\mathbf{0},\mathbf{0}) = \int e^{-t(q+1-x)}\,\mathrm{d}\mu_q(x).
\]
When \( q = 1 \), the tree \( T_{q+1} \) is the discrete line \( \ZZ \), and the heat kernel is given by
\[
  K_{\ZZ}(t) = e^{-2t}I_0(2t),
\]
where \( I_n \) is the \( n \)-th modified Bessel function of the first kind; see \cite{CJKS25} for a recent overview of \( K_\ZZ \).

When \( q > 1 \), the spectrum of \( \Delta_q \) does not contain \( 0 \). Therefore, for every \( s \in \CC \), the map \( x \mapsto x^{-s} \) is continuous on \( \sigma(\Delta_q)=\Omega_q^+ \), which implies that the operator \( \Delta_q^{-s} \) is well defined and bounded by the spectral theorem. From this viewpoint, the local spectral zeta function at \( \mathbf{0} \) is the entire function
\[
  \zeta_q(s) = \int (q+1-\lambda)^{-s}\,\mathrm{d}\mu_q(\lambda)
  = \langle\Delta_q^{-s}\delta_{\mathbf{0}}\mid \delta_{\mathbf{0}}\rangle.
\]

\section{The fundamental symmetry and explicit formulas for \( G_\pm \)}

The aim of this section is to prove the symmetry stated in \Cref{Thm:Fundamental symmetry}. The idea of the proof is that the expansions of the Laplace transform of the heat kernel near \( z = 0 \) and near \( z = \infty \) are directly linked with the generating functions \( G_+ \) and \( G_- \) defined in \eqref{Def:L and F}. Heuristically, \Cref{Thm:Fundamental symmetry} may be summarized by the formal identity
\[
  \sum_{n \in \ZZ}\zeta_q(n)z^n = 0.
\]
We then recall Kesten's computation of the generating function of the probability of return after \( n \) steps and use it to deduce closed formulas for \( G_\pm \).

\subsection{Proof of \Cref{Thm:Fundamental symmetry}}

The Laplace transform of the heat kernel is defined by
\[
  \mathcal{L}[K_q](s) \ldef \int_0^\infty K_q(t)e^{-st}\,\mathrm{d}t.
\]
It is analytic for \( \re s > 0 \), and in that region
\[
  \mathcal{L}[K_q](s) = \int\frac{1}{q+1-\lambda+s}\,\mathrm{d}\mu_q(\lambda),
\]
showing in fact that it is analytic outside \( -\Omega_q^+ \). Near \( s = 0 \), more precisely for \( |s| < (\sqrt{q}-1)^2 \), we have
\begin{equation}\label{Eq:Laplace near s=0}
     s\,\mathcal{L}[K_q](-s) = G_+(s),
\end{equation}
showing that the Laplace transform of the heat kernel provides the analytic continuation of \( G_+ \) to \( \CC \setminus \Omega_q^+ \). Moreover, near \( s = \infty \), more precisely when \( |s| > (\sqrt{q}+1)^2 \), we have
\begin{equation}\label{Eq:Laplace near s = infty}
    -\mathcal{L}[K_q](-s) = \frac{1}{s}G_-\left(\frac{1}{s}\right).
\end{equation}
In summary, we have shown that through the Laplace transform of the heat kernel, the functions \( G_\pm \) admit analytic continuations outside the spectral cuts \( \Omega_q^\pm \), respectively. The map \( z \mapsto 1/z \) exchanges their domains. Moreover, by combining \eqref{Eq:Laplace near s=0} and \eqref{Eq:Laplace near s = infty}, we obtain the desired relation: outside \( \Omega_q^+ \),
\[
  G_+(z) = z\,\mathcal{L}[K_q](-z) = - G_-\left(\frac{1}{z}\right),
\]
as claimed.
\hfill\(\square\)

\begin{remark}
    The proof displayed above relies only on the fact that \( \Delta_q \) is positive definite and bounded. Indeed, it is well known that the Laplace transform of the heat kernel is essentially the resolvent of the underlying operator. The same argument applies to the resolvent of any such operator.

    Consequently, for any transitive graph of finite degree with a spectral gap, this symmetry applies.
\end{remark}

\subsection{Kesten's generating function and a formula for \( G_- \)}

The Kesten--McKay law \( \mu_q \), \ie\ the spectral measure of the adjacency operator \( A \) at the root, was computed by Kesten in \cite{Kes59} by studying the probability of return after \( n \) steps, or equivalently the numbers
\[
c_q(n) \ldef \langle A^{n}\delta_{\mathbf{0}} \mid \delta_{\mathbf{0}}\rangle
\]
of closed walks of length \( n \) from the root. These are the moments of \(\mu_q\):
\[
c_q(n)=\int \lambda^{n}\, \mathrm{d}\mu_q(\lambda).
\]
Kesten computed their generating function
\[
  F(z)
  \ldef \sum_{n\ge 0} c_q(n) z^n
  = \frac{1}{2}\,\frac{(q+1)\sqrt{1-4q z^2} - (q-1)}{1-(q+1)^2 z^2},
\]
where the branch is chosen so that \( F(0) = 1 \).
Since \( \Delta_q = (q+1)-A \), the functions \( F \) and \( G_- \) satisfy the relation
\[
  G_-(z) = \frac{1}{1-(q+1)z}F\left(\frac{z}{(q+1)z-1}\right),
\]
and, at the level of the coefficients,
\[
  \zeta_q(-m) = \sum_{j=0}^{m}\binom{m}{j}(-1)^j c_q(j)(q+1)^{m-j}.
\]
The formula for \( F \) implies that the \( c_q(n) \) are polynomials in \( q \) with integer coefficients, which in turn implies that for negative integers \(-m \le 0\), the special values \( \zeta_q(-m) \) are polynomials in \( q \) with integer coefficients. Using the relation between \( G_- \) and \( F \), we obtain
\begin{equation}\label{Eq:elementary expression for F}
    G_-(z) = \frac{1}{2}\,\frac{
(q+1)\sqrt{1-2(q+1)z + (q-1)^2 z^2}
+ z(q^2-1) - (q-1)
}{
1 - 2(q+1)z
}.
\end{equation}
Extracting from this expression a closed formula for the special values at negative integers is cumbersome. However, using the hypergeometric nature of \( \zeta_q \) and a completely different method, such a closed formula was obtained in \cite{CJKS25}; see \eqref{eq:CJKSformula} in the introduction.

\subsection{The formula for \( G_+ \)}

By combining \eqref{Eq:elementary expression for F} and \Cref{Thm:Fundamental symmetry}, we obtain an elementary expression for \( G_+ \), the generating function of the positive special values of \( \zeta_q \).

\begin{lemma}\label{Lem:Elementary expression for L}
For \( q > 1 \), the following equivalent statements hold:
\begin{enumerate}[label = (\roman*)]
  \item For \( z \in \CC \setminus \Omega_q^+ \), we have
  \begin{equation}\label{Lem:Alebraic form of L}
        G_+(z)=\frac{1}{2}\,
        \frac{(q+1)\sqrt{(q-1)^2-2z(q+1)+z^2}+z(q-1)-(q^2-1)}{z-2(q+1)}.
  \end{equation}
  \item The generating function \( G_+ \) satisfies the quadratic equation
  \begin{equation}\label{Lem:Quadratic equation of L}
      (2(q+1)-z)G_+^2+(q-1)(z-(q+1))G_++zq=0.
  \end{equation}
  \item The sequence of real numbers defined by \( a_n \ldef \zeta_q(n) \) satisfies, for \( n \ge 2 \),
  \begin{equation}\label{Lem:Recursive relation for a_n}
      (q^2-1)a_n
  =2(q+1)\sum_{j=1}^{n-1}a_ja_{n-j}
  -\sum_{j=1}^{n-2}a_ja_{n-1-j}
  +(q-1)a_{n-1},
  \end{equation}
  with \( a_0 = 1 \) and \( a_1 = \tfrac{q}{q^2-1} \).
\end{enumerate}
\end{lemma}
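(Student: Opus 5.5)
The plan is to derive (i) directly from \Cref{Thm:Fundamental symmetry} and the closed form \eqref{Eq:elementary expression for F} for \( G_- \). Then (ii) follows from (i) by isolating the square root and squaring, and (iii) follows from (ii) by comparing coefficients. Finally, uniqueness arguments show the three statements are equivalent.

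\textbf{Step (i).} For \( z \) outside \( \Omega_q^+ \), \Cref{Thm:Fundamental symmetry} gives \( G_+(z) = -G_-(1/z) \). I substitute \( 1/z \) into \eqref{Eq:elementary expression for F} and multiply numerator and denominator by \( z \). The denominator becomes \( z-2(q+1) \), and the rational part of the numerator becomes \( (q^2-1)-(q-1)z \). The radical becomes
\[
z\sqrt{1-2(q+1)/z+(q-1)^2/z^2}=\pm\sqrt{(q-1)^2-2(q+1)z+z^2}.
\]
After negating, this is exactly the expression in \eqref{Lem:Alebraic form of L}, up to the choice of branch.

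The only genuine issue is fixing that branch. The discriminant \( (q-1)^2-2(q+1)z+z^2 = (z-(\sqrt q-1)^2)(z-(\sqrt q+1)^2) \) vanishes exactly at the endpoints of \( \Omega_q^+ \). Hence a single-valued branch exists on \( \CC\setminus\Omega_q^+ \). I would pin it down by requiring \( G_+(0)=0 \): with \( \sqrt{(q-1)^2}=q-1 \), the numerator at \( z=0 \) is \( (q+1)(q-1)-(q^2-1)=0 \), as needed. Since both sides of the identity are analytic on the connected set \( \CC\setminus\Omega_q^+ \), agreement near \( 0 \) propagates everywhere. I expect this branch bookkeeping (matching the branch of \( G_- \), for which \( G_-(0)=1 \), under \( z\mapsto 1/z \)) to be the main point requiring care, though it is not deep.

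\textbf{Step (ii).} Put \( w=z-2(q+1) \) and \( D=(q-1)^2-2(q+1)z+z^2 \). Then (i) reads
\[
2wG_+ + (q-1)(q+1-z) = (q+1)\sqrt{D}.
\]
Squaring gives
\[
4w^2G_+^2+4w(q-1)(q+1-z)G_+ + (q-1)^2(q+1-z)^2-(q+1)^2D=0.
\]
A short expansion shows the constant term equals \( -4qz\,w \). Dividing by \( -4w \) then yields \eqref{Lem:Quadratic equation of L}. For the converse direction, note that the linear coefficient of the quadratic at \( z=0 \) is \( -(q^2-1)\neq 0 \). Therefore exactly one power-series root vanishes at \( 0 \), and it is the branch selected in (i).

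\textbf{Step (iii).} I write \( G_+=\sum_{n\ge1}a_nz^n \) and extract the coefficient of \( z^n \) in \eqref{Lem:Quadratic equation of L}.
\begin{itemize}
\item At order \( z^1 \), only the linear and constant terms contribute: \( -(q^2-1)a_1+q=0 \), so \( a_1=q/(q^2-1) \).
\item At order \( z^n \) with \( n\ge2 \), the contributions are:
\begin{itemize}
\item \( 2(q+1)\sum_{j=1}^{n-1}a_ja_{n-j} \) from \( 2(q+1)G_+^2 \);
\item \( -\sum_{j=1}^{n-2}a_ja_{n-1-j} \) from \( -zG_+^2 \);
\item \( (q-1)a_{n-1}-(q^2-1)a_n \) from the linear term.
\end{itemize}
Setting their sum to zero is \eqref{Lem:Recursive relation for a_n}.
\end{itemize}
Conversely, the recursion together with the value of \( a_1 \) determines the series uniquely, because the coefficient \( q^2-1 \) of \( a_n \) is nonzero. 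This closes the chain of equivalences. (The value \( a_0=1 \) is \( \zeta_q(0) \) and does not enter the sums.)
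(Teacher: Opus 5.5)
Your proposal is correct and follows the paper's route. Like the paper, you substitute $1/z$ into the closed form for $G_-$ via \Cref{Thm:Fundamental symmetry}, fix the branch by $G_+(0)=0$, and treat (ii) and (iii) as algebraic reformulations; you simply carry out the squaring and coefficient extraction that the paper leaves implicit.

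One small point you gloss over is the apparent pole at $z=2(q+1)\notin\Omega_q^+$. It is removable, because the selected branch of the square root equals $-(q-1)$ there. So the identity-theorem argument should be run on $\CC\setminus(\Omega_q^+\cup\{2(q+1)\})$ and then extended to that point.
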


\begin{proof}
    The first statement follows by a direct substitution. The only delicate point is the choice of branch for the square root, but the condition \( G_+(0) = 0 \) determines it uniquely. The remaining statements are equivalent reformulations of \eqref{Lem:Alebraic form of L} and follow directly.
\end{proof}

\section{Proof of \Cref{Thm:Main} and a recursive relation for the \( P_n \)'s}\label{Sec:4}

In this section, using the explicit expression for \( G_+ \) obtained in \Cref{Lem:Elementary expression for L}, we prove the first part of \Cref{Thm:Main}, leaving the combinatorial interpretation to the final section. We then translate the recursive relation \eqref{Lem:Recursive relation for a_n} into a recursive relation for the \( P_n \)'s.

\subsection{Proof of \Cref{Thm:Main}}

By \eqref{Lem:Alebraic form of L}, we write
\[
  G_+(z(q+1)) = \frac{q-1}{2}+ \frac{q-1}{2(z-2)}\left[1+\sqrt{1+k^2z(z-2)}\right],
\]
with \( k = \tfrac{q+1}{q-1} \). Observe that \( k^2 \) is invariant under the inversion \( q \leftrightarrow 1/q \). Expanding the square root near \( z = 0 \), one obtains that for \( n \ge 1 \),
\[
  (q+1)^n\zeta_q(n) = (q-1)R_n(k^2),
\]
where \( R_n \) is a polynomial with rational coefficients of degree \( n \) satisfying \( R_n(1) = 0 \). Translating back in terms of \( q \), we write
\[
  R_n(k^2) = \frac{1}{(q-1)^{2n}}\,qP_n(q),
\]
where \( P_n(q) \) is a rational polynomial satisfying \( q^{2n-2}P_n(1/q) = P_n(q) \).

Now recall that the spectral radius of \( A \) is \( 2\sqrt{q} \), implying that for fixed \( s \in \CC \), the convergence in operator norm
\[
  \|(q+1)^s\Delta_q^{-s}- I_q\| \underset{q \to \infty}{\longrightarrow} 0
\]
holds, where \( I_q \) is the identity operator on \( \ell^2(T_{q+1}) \). Thus
\[
  (q+1)^s\zeta_q(s) \underset{q \to \infty}{\longrightarrow} 1
\]
for every \( s \in \CC \). It follows that \( P_n \) is monic of degree \( 2n-2 \). In summary, we have shown that for \( n \ge 1 \),
\begin{equation}\label{Formula:definition of P_n}
     \zeta_q(n) = \frac{q}{(q-1)^{2n-1}(q+1)^n}P_n(q),
\end{equation}
with \( P_n \) a palindromic and monic polynomial of degree \( 2n-2 \).

To conclude the proof, it remains to show that the coefficients of \( P_n \) are integers. In view of \eqref{Formula:definition of P_n}, this is equivalent to showing that for all \( n \ge 0 \), \( \zeta_q(n) \) belongs to the ring \( \ZZ[[q]] \). This follows by induction from the recursive relation \eqref{Lem:Recursive relation for a_n}.
\hfill\(\square\)

\begin{remark}
The symmetry exhibited by the palindromic property of the polynomials \( P_n \) in \Cref{Thm:Main} is already present at the level of the spectral measure \( \mu_q \). Indeed, through the change of variable \( \lambda = (q+1)\, x \) in \eqref{Def:zeta function on the tree}, namely by considering the normalized Laplacian instead of the combinatorial one, the function \( \zeta_q \) can be written as
\begin{equation}\label{Eq:zeta_q as function of the spectral radius}
    (q+1)^{s-1}\zeta_q(s) = \frac{1}{2\pi}\int_{-\rho}^{\rho}(1-x)^{-s}\frac{\sqrt{\rho^2-x^2}}{1-x^2}\,\mathrm{d}x,
\end{equation}
where \( \rho \) is the spectral radius of the transition operator of the random walk on \( T_{q+1} \),
\[ \rho^{-1} = \frac{1}{2}\left(\sqrt{q} + \frac{1}{\sqrt{q}}\right). \]
However, it is curious that, although the right-hand side of \eqref{Eq:zeta_q as function of the spectral radius} is invariant under \( q \leftrightarrow 1/q \), the left-hand side is not strictly invariant under this inversion. For example, the special values at negative integers \eqref{eq:CJKSformula} do not have any palindromic structure. Even at positive integers, the special values change sign under \( q \leftrightarrow 1/q \). This breaking of symmetry comes from the fact that for fixed \( s \in \CC \), the right-hand side of \eqref{Eq:zeta_q as function of the spectral radius} --- when viewed as a function of \( q \) --- is not analytic at \( q = 1 \). The value \( q = 1 \) is the only value for which \( 0 \) belongs to the spectrum of \( \Delta_q \).

We also note that the algebraic structure of the special values in \Cref{Thm:Main} ultimately relies on the algebraic nature of \( G_+ \). We deduce it from the fundamental symmetry between the generating functions of special values, together with Kesten's previous work. This derivation can also be achieved independently through the \term{spectral angle representation} of \( \zeta_q \):
\[
  \zeta_q(s) = \frac{2}{\pi}q(q+1)\int_0^\pi(q+1-2\sqrt{q}\cos\theta)^{-s}\frac{\sin^2 \theta}{(q+1)^2-4q\cos^2\theta}\,\mathrm{d}\theta.
\]
By directly computing the generating function and the resulting integral, one obtains \Cref{Lem:Elementary expression for L} by an alternative method.

However, we believe that our approach is more conceptual and technically less demanding. The symmetry of \Cref{Thm:Fundamental symmetry} is the structural mechanism that explains why the generating function \( G_+ \) admits an algebraic closed form and, consequently, why the polynomials \( P_n \) inherit palindromic and integrality properties. By contrast, the angle parametrization leads to a longer and more technical derivation, and makes the conceptual origin of the algebraic expression less transparent.
\end{remark}

\subsection{A recursive relation for the \( P_n \)'s}

The first values of \( P_n \) can be computed by hand:
\begin{align*}
    P_1(q) & = 1, \\
    P_2(q) & = q^2+1, \\
    P_3(q) & = q^4 + q^3 + 4q^2 + q + 1, \\
    P_4(q) & = q^6 + 3q^5 + 11q^4 + 10q^3 + 11q^2 + 3q + 1, \\
    P_5(q) & = q^8 + 6q^7 + 26q^6 + 46q^5 + 66q^4 + 46q^3 + 26q^2 + 6q + 1.
\end{align*}
Remarkably, all coefficients appearing above are non-negative. In fact, this phenomenon holds for every polynomial \( P_n \), as will be proved in the next section, where we give a combinatorial interpretation of the coefficients. Before doing so, we translate \Cref{Lem:Elementary expression for L} to obtain a recursive relation for the \( P_n \)'s and a quadratic equation for their generating function.

\begin{corollary}[of \Cref{Lem:Elementary expression for L}]\label{Cor:Quadratic equation for T}
    Let \( T \) be the generating function of the family of polynomials \( P_n \), namely
    \[
      T(z) \ldef \sum_{n \ge 1}P_n(q)z^{n-1}.
    \]
    It satisfies the quadratic equation
    \[
      qz\bigl(2-z(q-1)^2\bigr)T^2 +\bigl(z(q-1)^2-1\bigr)T + 1 = 0,
    \]
    and consequently the family \( P_n \) satisfies the recursive relation, for \( n \ge 1 \),
    \begin{equation}
        P_{n+1} = 2q\sum_{j=1}^nP_jP_{n+1-j} -q(q-1)^2\sum_{j=1}^{n-1}P_jP_{n-j}+(q-1)^2P_n,
    \end{equation}
    with \( P_0 = 1 \).
\end{corollary}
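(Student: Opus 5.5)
The plan is to rescale the quadratic equation \eqref{Lem:Quadratic equation of L} of \Cref{Lem:Elementary expression for L} so that its unknown becomes \( T \), and then read off the recursion by comparing coefficients. By \eqref{Formula:definition of P_n}, for \( n \ge 1 \),
\[
  \zeta_q(n) z^n = q(q-1)\,P_n(q)\left(\frac{z}{(q-1)^2(q+1)}\right)^{n}.
\]
So I will set \( w \ldef z/\bigl((q-1)^2(q+1)\bigr) \). Summing over \( n \ge 1 \) then gives the identity of formal power series (and of analytic functions near \( 0 \))
\[
  G_+(z) = q(q-1)\,w\,T(w).
\]

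Next I substitute \( z = (q-1)^2(q+1)w \) and \( G_+ = q(q-1)wT \) into \eqref{Lem:Quadratic equation of L}. The three coefficients become:
\[
  2(q+1)-z = (q+1)\bigl(2-(q-1)^2w\bigr),
\]
\[
  (q-1)\bigl(z-(q+1)\bigr) = (q-1)(q+1)\bigl((q-1)^2w-1\bigr),
\]
\[
  zq = q(q-1)^2(q+1)w.
\]
Each of the three terms then carries the common factor \( q(q+1)(q-1)^2 w \). The quadratic term is \( (q+1)q^2(q-1)^2w^2\bigl(2-(q-1)^2w\bigr)T^2 \), and the linear term is \( q(q+1)(q-1)^2w\bigl((q-1)^2w-1\bigr)T \). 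This factor is nonzero for \( q>1 \) and \( w \neq 0 \), so dividing by it gives
\[
  qw\bigl(2-(q-1)^2w\bigr)T^2+\bigl((q-1)^2w-1\bigr)T+1=0.
\]
This is the claimed quadratic equation after renaming \( w \) as \( z \). Since the equation is polynomial in \( w \), it holds as an identity of formal power series, which is what the coefficient comparison needs.

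For the recursion, I write \( T(w)=\sum_{m\ge0}P_{m+1}w^m \). Then
\[
  [w^m]\,T^2=\sum_{j=1}^{m+1}P_jP_{m+2-j}.
\]
The constant term of the quadratic equation gives \( 1-P_1=0 \), which is consistent with \( P_1=1 \). For \( n\ge1 \), the coefficient of \( w^n \) gives
\[
  2q\,[w^{n-1}]T^2-q(q-1)^2\,[w^{n-2}]T^2+(q-1)^2P_n-P_{n+1}=0.
\]
Inserting the convolution formula with \( m=n-1 \) and \( m=n-2 \) produces the two sums \( \sum_{j=1}^nP_jP_{n+1-j} \) and \( \sum_{j=1}^{n-1}P_jP_{n-j} \). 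For \( n=1 \) the second sum is empty.

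There is no real obstacle here. The only care needed is in the bookkeeping: the powers of \( (q-1) \) and \( (q+1) \) in the rescaling must be tracked so that the common factor cancels exactly, and the indices in the convolution must be shifted correctly. The convention \( P_0=1 \) plays no role beyond making the statement uniform.
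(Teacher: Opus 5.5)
Your proposal is correct and follows the same route as the paper, which derives the corollary implicitly by translating \Cref{Lem:Elementary expression for L}: you substitute $\zeta_q(n)=qP_n(q)/\bigl((q-1)^{2n-1}(q+1)^n\bigr)$ into \eqref{Lem:Quadratic equation of L} via $z=(q-1)^2(q+1)w$, cancel the common factor $q(q+1)(q-1)^2w$, and compare coefficients. Your coefficient and index bookkeeping checks out, as does the observation that the constant term forces $P_1=1$; the only addition worth making is that each coefficient identity holds for every $q>1$ and both sides are polynomials in $q$, so the recursion also holds as an identity of polynomials.
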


Observe that, by working directly with the recursive relation, one can also deduce \Cref{Thm:Main}.

\section{The functional equation}

In this section we prove \Cref{Thm:F.E.}.

The first step toward this result is yet another relation between the generating functions \( G_\pm \)---defined in the introduction---obtained by taking advantage of their algebraic expressions.

\begin{proposition}\label{Prop:F.E. at the level of generating functions}
    Let \( q > 1 \), \( k \ldef \tfrac{q+1}{q-1} \), and define
    \[ \mathcal{E}(z) \ldef G_-\left(\frac{z}{q-1}\right)\cdot(1-2kz) + G_+((q-1)z)\cdot(2k-z). \]
    Then \( \mathcal{E} \) can be analytically continued to an entire function and satisfies
    \begin{equation}\label{Eq:Symmetry of G_pm}
        \mathcal{E}(z) = z+1.
    \end{equation}
\end{proposition}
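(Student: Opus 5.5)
The plan is to verify \eqref{Eq:Symmetry of G_pm} directly from the closed forms \eqref{Eq:elementary expression for F} for \( G_- \) and \eqref{Lem:Alebraic form of L} for \( G_+ \). The rescalings \( z \mapsto z/(q-1) \) and \( z \mapsto (q-1)z \) are chosen precisely so that both square roots become the same function \( S(z) \ldef \sqrt{1-2kz+z^2} \). Once that is done, the identity should reduce to elementary algebra. The one genuine issue is the choice of branches.

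\emph{First term.} Set \( w = z/(q-1) \). Then \( 1-2(q+1)w = 1-2kz \), which cancels the denominator in \eqref{Eq:elementary expression for F}. Under the radical,
\[ 1-2(q+1)w+(q-1)^2w^2 = 1-2kz+z^2 . \]
Hence
\[ G_-\!\left(\tfrac{z}{q-1}\right)(1-2kz) = \tfrac12\bigl[(q+1)S(z) + (q+1)z - (q-1)\bigr]. \]

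\emph{Second term.} Set \( w = (q-1)z \) in \eqref{Lem:Alebraic form of L}. The radicand becomes \( (q-1)^2(1-2kz+z^2) \), so the square root is \( (q-1)S(z) \). The denominator becomes \( (q-1)(z-2k) \). After cancelling the common factor \( q-1 \),
\[ G_+((q-1)z)(2k-z) = -\tfrac12\bigl[(q+1)S(z) + (q-1)z - (q+1)\bigr]. \]

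\emph{Sum.} Adding the two expressions, the \( S \)-terms cancel, provided the two radicals denote the same branch. What remains is \( \tfrac12(2z+2) = z+1 \).

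\emph{Domains and branches.} This is the step I expect to require the most care. The first term is analytic off \( (q-1)\Omega_q^- \) and the second off \( \Omega_q^+/(q-1) \). Computing endpoints, both sets equal the same interval
\[ I = \left[\tfrac{\sqrt q-1}{\sqrt q+1},\,\tfrac{\sqrt q+1}{\sqrt q-1}\right]. \]
This interval is exactly the segment between the roots of \( 1-2kz+z^2 \), which are \( k\pm\sqrt{k^2-1} \) with product \( 1 \). Therefore \( S \) has a single-valued branch on \( \CC\setminus I \) with \( S(0)=1 \), behaving like \( \pm z \) at infinity.

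I will check the branch normalizations separately:
\begin{itemize}[label=\(\bullet\)]
  \item \( G_-(0) = \zeta_q(0) = 1 \) forces \( S(0)=1 \) in the first term.
  \item \( G_+(0)=0 \) forces \( (q+1)S(0)-(q+1) = 0 \) in the second term, so again \( S(0)=1 \).
\end{itemize}
So both terms use the same branch near \( 0 \), hence on the connected set \( \CC\setminus I \).

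\emph{Removable singularities.} The points \( z=1/(2k) \) and \( z=2k \), where the original denominators vanish, need a check: they must be removable in the original products, i.e.\ they are not poles of \( G_\mp \). This is automatic, because each product is a product of analytic functions there. Indeed, \( 1/(2k) \) and \( 2k \) lie outside \( I \), since \( k>1 \) and \( 2k>(\sqrt q+1)/(\sqrt q-1) \).

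\emph{Conclusion.} Thus \( \mathcal{E} \) is analytic on \( \CC\setminus I \) and equals \( z+1 \) there. Consequently \( z+1 \) provides its entire continuation, which proves \eqref{Eq:Symmetry of G_pm}.
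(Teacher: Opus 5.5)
Your proposal is correct and follows the same route as the paper: substitute the closed forms for \(G_-\) and \(G_+\), note that the rescalings turn both radicals into \(\sqrt{1-2kz+z^2}\), and add the two terms to get \(z+1\). The paper only checks the identity for \(z\) small. Your extra bookkeeping (the common cut \([\beta,\beta^{-1}]\), the matching normalization \(S(0)=1\), and the identity-theorem continuation off the cut) makes explicit the step the paper leaves implicit.
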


\begin{proof}
    From \eqref{Eq:elementary expression for F} and \eqref{Lem:Alebraic form of L}, around \( z = 0 \), we have
    \[ G_+((q-1)z) = \frac{1}{2}\frac{(q+1)\sqrt{1-2zk+z^2}+z(q-1)-(q+1)}{z-2k}, \]
    and
    \[ G_-\left(\frac{z}{q-1}\right) = \frac{1}{2}\frac{(q+1)\sqrt{1-2kz+z^2}+(q+1)z-(q-1)}{1-2kz}, \]
    which shows that, for \( z \) small enough,
    \[ \mathcal{E}(z) = z + 1. \]
\end{proof}

Consequently, by comparing coefficients in the generating series, we obtain the following recursive relation.

\begin{corollary}
    Let \( q > 1 \) and \( a_n \ldef \zeta_q(n) \) for \( n \in \ZZ \). Then for any \( n \in \ZZ \), we have the two-step recursive relation
    \begin{equation}\label{Eq:F.E. at special values}
        a_{-n}-2(q+1)a_{1-n} = (q-1)^{2n-1}\left[a_{n-1}-2(q+1)a_n\right].
    \end{equation}
\end{corollary}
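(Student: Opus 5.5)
The plan is to expand both sides of the identity $\mathcal{E}(z)=z+1$ of \Cref{Prop:F.E. at the level of generating functions} as power series near $z=0$ and compare coefficients. For $|z|$ small both $G_-\!\left(\tfrac{z}{q-1}\right)$ and $G_+((q-1)z)$ are given by their defining series \eqref{Def:L and F}, so the identity holds coefficientwise. Throughout, write $a_n=\zeta_q(n)$ with $a_0=1$, and note that $2k(q-1)=2(q+1)$.

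\textbf{Step 1: the coefficient of $z^n$ in $\mathcal{E}$.} From $G_-\!\left(\tfrac{z}{q-1}\right)=\sum_{m\ge0}a_{-m}(q-1)^{-m}z^m$, multiplying by $(1-2kz)$ gives, for $n\ge 1$, the coefficient
\[
(q-1)^{-n}\bigl[a_{-n}-2(q+1)a_{1-n}\bigr].
\]
From $G_+((q-1)z)=\sum_{m\ge1}a_m(q-1)^mz^m$, multiplying by $(2k-z)$ gives the coefficient
\[
(q-1)^{n-1}\bigl[2(q+1)a_n-a_{n-1}\bigr] \quad\text{for } n\ge 2 .
\]
For $n=1$ this second coefficient is only $2(q+1)a_1(q-1)$, because $G_+$ has no constant term, so the term $-a_0$ is missing.

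\textbf{Step 2: compare with $z+1$.} The constant term gives $a_0=1$, which is consistent. For $n\ge2$ the coefficient of $z^n$ in $z+1$ is $0$, which yields exactly \eqref{Eq:F.E. at special values} after multiplying by $(q-1)^n$. For $n=1$ the right-hand side contributes $1=a_0$, and this supplies precisely the missing $-a_0$ term. The result is
\[
(q-1)^{-1}\bigl[a_{-1}-2(q+1)a_0\bigr] = a_0-2(q+1)a_1,
\]
which is again \eqref{Eq:F.E. at special values} for $n=1$. This bookkeeping of the $z+1$ term against the absent $a_0$ in $G_+$ is the only delicate point.

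\textbf{Step 3: extend to all $n\in\ZZ$.} Substitute $n\mapsto 1-n$ in \eqref{Eq:F.E. at special values}. This gives
\[
a_{n-1}-2(q+1)a_n=(q-1)^{1-2n}\bigl[a_{-n}-2(q+1)a_{1-n}\bigr],
\]
which is the same identity after multiplying through by $(q-1)^{2n-1}$. So the relation is invariant under $n\leftrightarrow 1-n$. Since it holds for every $n\ge1$ by Step 2, it therefore holds for every $n\le 0$ as well, hence for all $n\in\ZZ$.
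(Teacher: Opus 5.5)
Your proposal is correct and follows the paper's argument: compare coefficients of $z^n$ in $\mathcal{E}(z)=z+1$, with your treatment of the $n=1$ term and of the $n\leftrightarrow 1-n$ invariance supplying details the paper leaves implicit. One small slip: in Step 1 the $n=1$ coefficient coming from $G_+$ is $2k(q-1)a_1=2(q+1)a_1$, not $2(q+1)(q-1)a_1$, but your Step 2 equation already uses the correct value.
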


Heuristically, the symmetry \eqref{Eq:Symmetry of G_pm} at the level of generating functions should induce a corresponding symmetry at the level of the zeta function. This is consistent with the general principle that the generating function, the heat kernel, and the spectral zeta function are related through Laplace -- and Mellin -- transform.

One is therefore led to consider the Mellin transform of \eqref{Eq:Symmetry of G_pm}, with the aim of recovering an analytic continuation of \eqref{Eq:F.E. at special values}, and hence the desired functional equation. The main difficulty is that the spectral cuts lie on the natural path of integration. A Hankel contour could be used to bypass the cut, but in the presence of a spectral gap one can choose a much more convenient contour.

Recall that for \( B \) a self-adjoint operator with spectrum \( \sigma(B) \), the resolvent of \( B \) is defined to be the holomorphic map
\[ \sigma(B) \not \ni z \longmapsto R_B(z) \ldef \frac{1}{z-B}. \]

\begin{lemma}\label{Lem:Cauchy integral formula}
    Let \( B \) be a bounded self-adjoint operator with spectrum \( \sigma(B) \). Let \( U \subset \CC \) be an open set containing \( \sigma(B) \), and let \( \gamma \) be a piecewise smooth positively oriented Jordan curve inside \( U \), containing \( \sigma(B) \) in its interior. Then for any holomorphic function \( h \colon U \rightarrow \CC \) and any \( n \in \ZZ \), we have
    \[ \frac{1}{2\pi i}\int_\gamma h(z)R_B(z)^n\,\mathrm{d}z = \begin{cases}
        h(B) & \text{if } n = 1, \\
        0 & \text{otherwise.}
    \end{cases} \]
\end{lemma}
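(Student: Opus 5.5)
The plan is to reduce to the scalar Cauchy integral formula via the spectral theorem, treating $n \ge 1$ and $n \le 0$ separately.

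Since $\gamma$ is a compact subset of $U\setminus\sigma(B)$, the map $z\mapsto R_B(z)$ is norm-continuous on $\gamma$. Hence all integrands are continuous operator-valued functions and the integrals converge in operator norm, as Riemann sums. Write the spectral resolution $B=\int_{\sigma(B)}\lambda\,\mathrm{d}E(\lambda)$. Then $R_B(z)^n=\int (z-\lambda)^{-n}\,\mathrm{d}E(\lambda)$ for every $n\in\ZZ$, where $n\le 0$ simply gives the polynomial $(z-B)^{|n|}$.

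For vectors $u,v$ I pair the operator integral with the finite complex measure $\langle \mathrm{d}E(\lambda)u\mid v\rangle$, which is supported on the compact set $\sigma(B)$. The integrand $h(z)(z-\lambda)^{-n}$ is jointly continuous on $\gamma\times\sigma(B)$, so Fubini lets me interchange the two integrals:
\[
\Bigl\langle \tfrac{1}{2\pi i}\int_\gamma h(z)R_B(z)^n\,\mathrm{d}z\,u \Bigm| v\Bigr\rangle=\int_{\sigma(B)}\Bigl(\frac{1}{2\pi i}\int_\gamma \frac{h(z)}{(z-\lambda)^n}\,\mathrm{d}z\Bigr)\langle \mathrm{d}E(\lambda)u\mid v\rangle .
\]
This reduces the statement to a scalar computation for each fixed $\lambda\in\sigma(B)$, which lies inside $\gamma$.

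In the scalar computation, I first take $n\le 0$. Then $h(z)(z-\lambda)^{|n|}$ is holomorphic on $U$, so the inner integral vanishes by Cauchy's theorem; for $n=0$ this just uses that $h$ is holomorphic. For $n=1$, Cauchy's integral formula gives $h(\lambda)$, and integrating against the spectral measure yields $\langle h(B)u\mid v\rangle$. So the operator is $h(B)$ as defined by the functional calculus, and this agrees with the holomorphic functional calculus.

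For $n\ge 2$, Cauchy's formula gives $h^{(n-1)}(\lambda)/(n-1)!$, which is not zero in general. Therefore the stated dichotomy cannot hold literally for every holomorphic $h$ and every $n\ge 2$. I expect this to be the main obstacle: the case $n\ge2$ must be read correctly for the way the lemma is used afterwards. A plausible intended reading is that $h$ is constant, or that $h$ is $R_B$-free such as $h\equiv1$, where all $n\ne1$ vanish. Another is that the claim is meant for $n\le 0$ together with $n=1$. I would state and prove the general formula
\[
\frac{1}{2\pi i}\int_\gamma h(z)R_B(z)^n\,\mathrm{d}z=\frac{h^{(n-1)}(B)}{(n-1)!}\quad(n\ge1),\qquad =0\quad(n\le0),
\]
and note that it specializes to the stated case distinction whenever the relevant derivatives of $h$ vanish, in particular for $h\equiv 1$. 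The only genuinely analytic points are the norm convergence of the integral and the justification of Fubini, both of which rest on the compactness of $\gamma\times\sigma(B)$.
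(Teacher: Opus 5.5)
Your argument is essentially the paper's: apply the spectral theorem, interchange the contour integral with the spectral integral, then use the scalar Cauchy formula; your Fubini justification via compactness of $\gamma\times\sigma(B)$ is exactly what is needed. You are also right that the paper's claim that the inner integral vanishes for every $n\neq 1$ is false for $n\ge 2$ (it equals $h^{(n-1)}(\lambda)/(n-1)!$), which is harmless because the proof of \Cref{Thm:F.E.} only uses $n=0$ and $n=1$; note too that both you and the paper tacitly need the region bounded by $\gamma$ to lie in $U$ (automatic for the half-plane used there), since otherwise Cauchy's theorem fails, e.g.\ for $h(z)=1/z$ on an annulus around $0$ with $\gamma$ winding around $0$.
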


\begin{proof}
    This is a straightforward application of the spectral theorem and its functional calculus. Indeed, let \( \mathbb{P}_B \) be the projection-valued measure of \( B \) provided by the spectral theorem. Then
    \begin{align*}
        \frac{1}{2\pi i}\int_\gamma h(z)R_B(z)^n\,\mathrm{d}z
        &= \frac{1}{2\pi i}\int_\gamma \int_{\sigma(B)}\frac{h(z)}{(z-\lambda)^n}\,\mathrm{d}\mathbb{P}_B(\lambda)\,\mathrm{d}z \\
        &= \int_{\sigma(B)}\frac{1}{2\pi i}\int_\gamma\frac{h(z)}{(z-\lambda)^n}\,\mathrm{d}z\,\mathrm{d}\mathbb{P}_B(\lambda).
    \end{align*}
    By Cauchy's integral formula, the inner integral is \( 0 \) if \( n \neq 1 \), and \( h(\lambda) \) otherwise. Therefore, for \( n = 1 \), we obtain
    \[ \frac{1}{2 \pi i}\int_\gamma h(z)R_B(z)\,\mathrm{d}z = \int_{\sigma(B)}h(\lambda)\,\mathrm{d}\mathbb{P}_B(\lambda) = h(B). \qedhere \]
\end{proof}

\begin{proof}[Proof of \Cref{Thm:F.E.}]
    Consider the operator
    \[ B \ldef \frac{\Delta_q}{q-1}, \]
    and observe that its spectrum is \( \sigma(B) = [\beta,\beta^{-1}] \), with
    \[ \beta \ldef \frac{\sqrt{q}-1}{\sqrt{q}+1}>0. \]
    In particular, \( B \) has a spectral gap and its inverse \( B^{-1} \) has the same spectrum as \( B \). Recall also that
    \[ \xi_q(s) \ldef (q-1)^s(2(q+1)\zeta_q(s)-\zeta_q(s-1)) = (q-1)\langle (2kB^{-s}-B^{1-s})\delta_\mathbf{0}\mid \delta_\mathbf{0} \rangle, \]
    where \( k = \tfrac{q+1}{q-1} \).
    From the definition of \( G_\pm \), we have
    \begin{align*}
        G_+((q-1)z) &= \langle -zR_B(z)\delta_\mathbf{0} \mid \delta_\mathbf{0} \rangle, \\
        G_-\left(\frac{z}{q-1}\right) &= \langle (1-zR_{B^{-1}}(z))\delta_\mathbf{0} \mid \delta_\mathbf{0} \rangle.
    \end{align*}
    Take \( \gamma \) to be a positively oriented parametrization of the circle:

\begin{center}

\begin{tikzpicture}[scale=2.2,>=stealth]

\def\betaval{0.55}
\pgfmathsetmacro{\invbeta}{1/\betaval}
\pgfmathsetmacro{\leftend}{\betaval/2}
\pgfmathsetmacro{\rightend}{\invbeta + \betaval/2}
\pgfmathsetmacro{\kcent}{(\leftend+\rightend)/2}   
\pgfmathsetmacro{\R}{(\rightend-\leftend)/2}       

\draw[->] (-0.2,0) -- (\rightend+0.35,0) node[right] {};
\draw[->] (0,-\R-0.25) -- (0,\R+0.35) node[above] {};

\fill (0,0) circle (0.02) node[below left] {$0$};

\draw[line width=1.2pt] (\betaval,0) -- (\invbeta,0);
\node[below] at ({(\betaval+\invbeta)/2},0) {$\sigma(B)$};
\fill (\betaval,0) circle (0.015) node[below] {$\beta$};
\fill (\invbeta,0) circle (0.015) node[below] {$\beta^{-1}$};

\fill (\leftend,1) circle (0.0012) node[right] {$\frac{\beta}{2}$};


\draw[
  line width=1.0pt,
  decoration={markings,
    mark=at position 0.10 with {\arrow{>}},
    mark=at position 0.35 with {\arrow{>}},
    mark=at position 0.60 with {\arrow{>}},
    mark=at position 0.85 with {\arrow{>}}
  },
  postaction={decorate}
]
(\kcent,0) circle (\R);

\node[above right] at ({\kcent+0.7*\R},{0.7*\R}) {$\gamma$};

\fill[opacity=0.06] (\kcent,0) circle (\R);

\draw[dashed] (\leftend,-\R-0.12) -- (\leftend,\R+0.12);

\end{tikzpicture}

\end{center}
Now for \( s \in \CC \), under the standard branch of the logarithm, the map \( z \mapsto z^{-s} \) is holomorphic on the open set \( U \ldef \{ z \in \CC \mid \re(z) > 0 \} \). By combining \Cref{Lem:Cauchy integral formula} and \Cref{Prop:F.E. at the level of generating functions}, we obtain, since \( \mathcal{E} \) is entire,
\begin{align*}
    0
    &= \frac{1}{2\pi i}\int_\gamma \mathcal{E}(z)z^{-s}\frac{\mathrm{d}z}{z} \\
    &= \Big\langle\frac{1}{2\pi i}\int_\gamma \left[(1-2kz)(1-zR_{B^{-1}}(z)) + (z-2k)zR_B(z)\right]z^{-s}\frac{\mathrm{d}z}{z}\,\delta_\mathbf{0} \mid \delta_\mathbf{0}\Big\rangle \\
    &= \langle (-B^{s} +2kB^{s-1} + B^{1-s}-2kB^{-s})\delta_\mathbf{0} \mid \delta_\mathbf{0}\rangle \\
    &= \frac{1}{q-1}(\xi_q(1-s) - \xi_q(s)). \qedhere
\end{align*} 
\end{proof}

\section{Combinatorial interpretation of \( P_n \)}\label{Sec:Combinatorial interpretation}

In this section we show that the coefficients of \( P_n \) admit a combinatorial interpretation in terms of $2$-coloured Dyck words of length \( 2n-2 \). The first hint in this direction is the observation that by substituting \( q = 1 \) into the quadratic equation satisfied by the generating function \( T \) in \Cref{Cor:Quadratic equation for T}, we obtain
\[ \sum_{n \ge 1}P_n(1)z^n = \frac{1-\sqrt{1-8z}}{4z}, \]
the generating function of the numbers \( 2^nC_n \), where \( C_n \) is the \( n \)-th Catalan number. It is well known that \( C_n \) counts the number of \term{Dyck paths} of length \( 2n \). A Dyck path is a path starting at \( (0,0) \in \ZZ^2 \) that ends on --- and always stays above --- the horizontal axis, with allowed steps in the set \( \{(1,1),(1,-1)\} \); see \Cref{fig:dyck8} below.

\begin{figure}[ht]
\centering
\begin{tikzpicture}[x=0.75cm,y=0.75cm, line cap=round, line join=round]

\draw[->] (-0.2,0) -- (8.6,0) node[right] {$x$};
\draw[->] (0,-0.2) -- (0,4.6) node[above] {$y$};

\draw[very thick]
(0,0)--(1,1)--(2,2)--(3,1)--(4,2)--(5,1)--(6,0)--(7,1)--(8,0);

\fill (0,0) circle (2pt);
\fill (8,0) circle (2pt);

\end{tikzpicture}
\caption{Example of a Dyck path of length $8$.}
\label{fig:dyck8}
\end{figure}
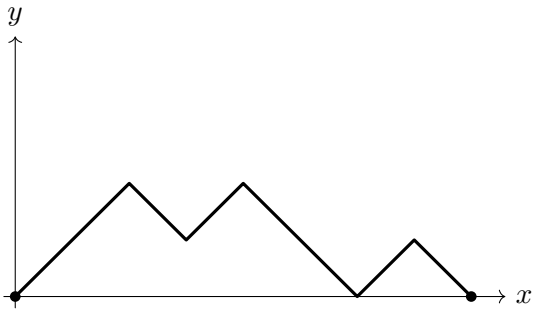

A $2$-\term{coloured Dyck path} is a Dyck path in which the downward steps \( (1,-1) \) are coloured in two colours; see \Cref{fig:dyck12} below.

\begin{figure}[ht]
\centering
\begin{tikzpicture}[x=0.6cm,y=0.6cm, line cap=round, line join=round]

\draw[->] (-0.2,0) -- (12.6,0) node[right] {$x$};

\draw[very thick] (0,0) -- (1,1);
\draw[very thick] (1,1) -- (2,2);
\draw[very thick] (2,2) -- (3,3);

\draw[very thick, blue]  (3,3) -- (4,2);

\draw[very thick] (4,2) -- (5,3);
\draw[very thick] (5,3) -- (6,4);

\draw[very thick, blue] (6,4) -- (7,3);
\draw[very thick, red]  (7,3) -- (8,2);

\draw[very thick] (8,2) -- (9,3);

\draw[very thick, blue] (9,3) -- (10,2);
\draw[very thick, red]  (10,2) -- (11,1);
\draw[very thick, blue] (11,1) -- (12,0);

\fill (0,0) circle (2pt);
\fill (12,0) circle (2pt);

\node[anchor=west] at (0,4.3) {\small $UUUBUUBRUBRB$};

\end{tikzpicture}
\caption{Example of a $2$-coloured Dyck path of length $12$.}
\label{fig:dyck12}
\end{figure}
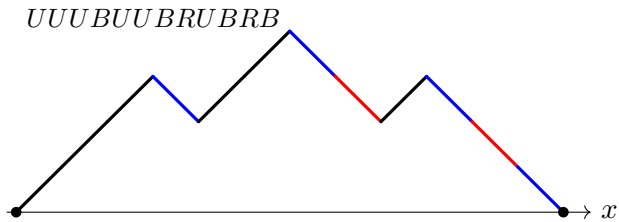

The identity \( P_n(1) = 2^{n-1}C_{n-1} \) strongly suggests that the coefficients of \( P_n \) count different types of $2$-coloured Dyck paths.

\begin{definition}
    Let \( \Sigma^*_3 \) be the free monoid on the set \( \{U,B,R\} \). To each word \( w \in \Sigma^*_3 \) we associate a \term{coloured path}, read from left to right, where \( U \) corresponds to the upward movement \( (1,1) \), while \( R \) and \( B \) correspond to downward movements coloured in red and blue, respectively. The set of $2$-\term{coloured Dyck words} is the subset \( \mathbb{D} \subset \Sigma^*_3 \) of those elements \( w \) such that the corresponding coloured path is a $2$-coloured Dyck path. The number of $2$-coloured Dyck words of length \( 2n \) is \( 2^nC_n \).

    Let \( w \in \Sigma_3^* \). A \term{block of colour} \( \beta \in \{U,B,R\} \) in \( w \) is a maximal and non-empty subword of \( w \) composed solely of the letter \( \beta \). Denote by \( r_\beta(w) \) the number of blocks of colour \( \beta \) in \( w \).
\end{definition}

We define a weight function on $2$-coloured Dyck words by taking the number of \( U \)'s, adding the number of blue blocks, and subtracting the number of red blocks. Precisely, for \( w \in \mathbb{D} \) of length \( 2n \), let
\[ h(w) \ldef n + r_B(w) - r_R(w). \]
Next we define the polynomials associated with \( h \): for \( n \ge 0 \),
\[ Q_n(t) = \sum_{w \in \mathbb{D}_n}t^{h(w)} = \sum_{k = 0}^{2n}\#\{w \in \mathbb{D}_n \mid h(w) = k\} t^k, \]
where \( \mathbb{D}_n \) is the set of \( w \in \mathbb{D} \) of length \( 2n \).
It is straightforward to see that the \( Q_n \) are monic, palindromic, and have non-negative integer coefficients. The first values of \( Q_n \) can be computed by hand:
\begin{align*}
    Q_0(t) & = 1, \\
    Q_1(t) & = t^2+1, \\
    Q_2(t) & = t^4 + t^3 + 4t^2 + t + 1, \\
    Q_3(t) & = t^6 + 3t^5 + 11t^4 + 10t^3 + 11t^2 + 3t + 1, \\
    Q_4(t) & = t^8 + 6t^7 + 26t^6 + 46t^5 + 66t^4 + 46t^3 + 26t^2 + 6t + 1.
\end{align*}
Remarkably, for all \( n = 0,\ldots,4 \), we have \( Q_n = P_{n+1} \); in fact, we shall show that this always holds.

\begin{theorem}\label{Thm:Combinatorial expression of P_n}
    For each \( n \ge 0 \), we have \( Q_n = P_{n+1} \).
\end{theorem}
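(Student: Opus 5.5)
The plan is to show that the generating function
\[ S(z) \ldef \sum_{n\ge 0} Q_n(q)\,z^n \]
satisfies the same quadratic equation as \( T \) in \Cref{Cor:Quadratic equation for T}, namely
\[ qz\bigl(2-z(q-1)^2\bigr)S^2+\bigl(z(q-1)^2-1\bigr)S+1=0. \]
Since \( S(0)=Q_0=1=P_1=T(0) \), and the recursion of \Cref{Cor:Quadratic equation for T} determines every coefficient from the lower ones, this gives \( S=T \) as formal power series in \( z \) with coefficients in \( \ZZ[q] \). It follows that \( Q_n=P_{n+1} \) for all \( n \). Equivalently, one could check that the \( Q_n \) obey the recursive relation of \Cref{Cor:Quadratic equation for T}, but the generating-function route is cleaner.

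To obtain the functional equation for \( S \), I will use a last-return decomposition that keeps track of the colour of the final letter. This is needed because blocks only interact when two down-steps of the same colour are adjacent. Let \( F_B \) (resp.\ \( F_R \)) be the generating function of nonempty \( w\in\mathbb{D} \) ending in \( B \) (resp.\ \( R \)), weighted by \( z^{n}q^{h(w)} \), so that \( S=1+F_B+F_R \). Every nonempty \( w \) factors uniquely as \( w=w_1\,U w_2 D \) with \( w_1,w_2\in\mathbb{D} \) and \( D\in\{B,R\} \).

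The block counts behave additively in this factorization, with two specific corrections. First, the junction between \( w_1 \) and the following \( U \) never merges blocks: \( w_1 \), if nonempty, ends with a down-step. Second, the final letter \( D \) creates a new block of colour \( D \) unless \( w_2 \) is nonempty and itself ends in \( D \), in which case it merges into the existing block. The extra \( U \) together with the length increase contributes a factor \( qz \). This yields the linear system
\[
F_B = S\cdot qz\bigl(q(1+F_R)+F_B\bigr),\qquad
F_R = S\cdot qz\bigl(q^{-1}(1+F_B)+F_R\bigr).
\]

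The remaining step is to eliminate \( F_B \) and \( F_R \) from this system together with \( S=1+F_B+F_R \). I will set \( X=1+F_B \) and \( Y=1+F_R \). Then the equations become linear in \( X,Y \) with coefficients polynomial in \( S \), \( z \), \( q \). Solving for \( X \) and \( Y \) and substituting into \( X+Y=S+1 \) should produce a polynomial relation in \( S \). The expectation is that, after clearing denominators and cancelling a common factor, it reduces to the quadratic displayed above.

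I expect this elimination to be the main obstacle. A priori it could yield a cubic, and one must check that a spurious factor cancels. The first thing to try is to add and subtract the two equations, which exploits the \( B\leftrightarrow R \), \( q\leftrightarrow q^{-1} \) symmetry of the system. As a sanity check, I will then compare the first few coefficients of the result against \( Q_1=q^2+1 \) and \( Q_2=q^4+q^3+4q^2+q+1 \).
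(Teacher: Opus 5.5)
Your proposal is correct and is essentially the paper's proof. It uses the same factorization \(w = w_1Uw_2D\), the same linear system for \(F_B, F_R\) (the paper's \(E_B, E_R\)), and the same uniqueness argument from \(S(0)=T(0)=1\). The elimination you flag as the main obstacle is harmless: with \(u = qzS/(1-qzS)\) the system gives \(F_B = u(u+q)/(1-u^2)\) and \(F_R = u(u+q^{-1})/(1-u^2)\), so \(S=(u+q)(u+q^{-1})/(1-u^2)\). Since the determinant of your \(2\times 2\) system is only \(1-2qzS\), clearing denominators gives the quadratic of \Cref{Cor:Quadratic equation for T} directly, with no spurious factor.
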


The proof rests on the fact that the weight \( h \) behaves well recursively. It yields a quadratic equation for the generating function of the \( Q_n \)'s which coincides with the one for the generating function \( T \) of the \( P_n \)'s, as described in \Cref{Cor:Quadratic equation for T}.

\begin{proof}
    The generating function \( H(t,z) \ldef \sum_{n \ge 0}Q_n(t)z^n \) can be written as
    \[
      H = \sum_{w \in \mathbb{D}}t^{h(w)}z^{l(w)},
    \]
    where \( l(w) \) is half the length of \( w \). Let us define the auxiliary generating functions
    \[
      E_B \ldef \sum_{w \in \mathbb{D}(B)}t^{h(w)}z^{l(w)}, \quad \text{and} \quad
      E_{R} \ldef \sum_{w \in \mathbb{D}(R)}t^{h(w)}z^{l(w)},
    \]
    where \( \mathbb{D}(\beta) \) denotes the subset of words in \( \mathbb{D} \) that end with \( \beta \). Then clearly,
    \begin{equation}\label{Eq:Decomposition of H}
        H = 1 + E_B + E_{R}.
    \end{equation}
    Let us investigate the recursive behaviour of \( h \). Observe first that every non-empty \( w \in \mathbb{D} \) can be written uniquely as
    \[ w_1Uw_2\beta, \]
    where \( w_1,w_2 \in \mathbb{D} \) (possibly empty), and \( \beta \in \{B,R\} \). Then
    \[ h(w_1Uw_2\beta) =
        h(w_1)+1+h(w_2) +\varepsilon(w_2\beta), \]
    where
    \[ \varepsilon(w_2\beta) = \begin{cases}
        0 & \text{if } w_2 \text{ ends with the letter } \beta, \\
        \mathrm{sign}(\beta) & \text{otherwise,}
    \end{cases} \]
    and where \( \mathrm{sign}(B) = 1 \) and \( \mathrm{sign}(R) = -1 \).
    Now let us compute
    \begin{align*}
        E_B
        &= \sum_{w_1Uw_2B}t^{h(w_1)+1+h(w_2)+\varepsilon(w_2B)}z^{l(w_1)+l(w_2)+1} \\
        &= tz \cdot H \cdot \sum_{w_2 \in \mathbb{D}}t^{h(w_2)+\varepsilon(w_2B)}z^{l(w_2)} \\
        &= tz \cdot H \cdot (E_B +t(1+E_R)).
    \end{align*}
    Similarly, we obtain
    \[
      E_{R}= H \cdot tz \cdot (E_{R}+t^{-1}(1+E_B)).
    \]
    By isolating \( E_B \) and \( E_{R} \) in each equation and by setting
    \[
      u \ldef \frac{H\cdot tz}{1-H\cdot tz},
    \]
    we obtain
    \[
      E_B = \frac{u(u+t)}{1-u^2}, \quad \text{and} \quad
      E_{R} = \frac{u(u+t^{-1})}{1-u^2}.
    \]
    Finally, substituting into \eqref{Eq:Decomposition of H}, we can write
    \begin{align*}
        H
        &= 1 + E_B + E_{R} \\
        &= \frac{(u+t)(u+t^{-1})}{1-u^2} \\
        &= \frac{-tz^2(t-1)^2H^2+z(t-1)^2H+1}{1-2tzH}.
    \end{align*}
    Consequently, \( H \) satisfies the quadratic equation
    \begin{equation}
        tz\bigl(2-z(t-1)^2\bigr)H^2 +\bigl(z(t-1)^2-1\bigr)H + 1 = 0.
    \end{equation}
    This is the same quadratic equation as the one satisfied by \( T \); see \Cref{Cor:Quadratic equation for T}. Since both satisfy \( T(0) = 1 = H(0) \), we deduce that \( H = T \), and consequently the result follows.
\end{proof}

As an immediate consequence, the coefficients of \( P_n \) are non-negative. In fact, \Cref{Thm:Main} is a corollary of \Cref{Thm:Combinatorial expression of P_n}. Using this combinatorial interpretation, it can be shown that, except for \( P_2 \), all the coefficients of \( P_n \) are positive.

D.M.
Section de mathématiques, Université de Genève, rue du Conseil-Général 7-9, 1205 Genève, Suisse,
dylan.mueller@unige.ch 

\end{document}